\documentclass[12pt, reqno]{amsart}
\usepackage[utf8]{inputenc}
\usepackage{amssymb,mathtools,cite,enumerate,color,eqnarray,hyperref,amsfonts,amsmath,amsthm,setspace,tikz,verbatim,charter,booktabs,multirow,xcolor}

\usepackage{youngtab}
\usepackage{tabularx}
\usepackage{ytableau}
\usetikzlibrary{tikzmark,shapes.geometric, arrows.meta, positioning, fit,decorations.pathreplacing}
\usepackage{subcaption}

\numberwithin{equation}{section}

\definecolor{ao(english)}{rgb}{0.0, 0.0, 0.6}

\hypersetup{colorlinks=true, linkcolor=ao(english),citecolor=ao(english)}

\usepackage[normalem]{ulem}

\theoremstyle{plain}
\newtheorem{theorem}{Theorem}
\numberwithin{theorem}{section}
\newtheorem{corollary}[theorem]{Corollary}
\newtheorem*{corollary*}{Corollary}
\newtheorem*{Example*}{Example}

\newtheorem{proposition}[theorem]{Proposition}

\newtheorem{conjecture}[theorem]{Conjecture}
\theoremstyle{definition}
\newtheorem{definition}[theorem]{Definition}
\newtheorem*{def*}{Definition}
\newtheorem*{theorem*}{Theorem}

\newtheorem*{definition*}{Definition}

\theoremstyle{remark}
\newtheorem*{remark}{Remark}

\allowdisplaybreaks

	\title[Hook Length Biases in $t$-Core Partitions]{Hook Length Biases in $t$-Core Partitions}

    \author[N. D. Baruah]{Nayandeep Deka Baruah}
\address{Department of Mathematical Sciences, Tezpur University, Assam, India, PIN-784028}
\email{nayan@tezu.ernet.in, nayandeeptezu@gmail.com}

	\author[H. Das]{Hirakjyoti Das}
	\address{Department of Mathematics, B. Borooah College (Autonomous), Guwahati 781007, Assam, India}
	\email{hirak@bborooahcollege.ac.in}

        \author[P. J. Mahanta]{Pankaj Jyoti Mahanta}
\address{Department of Mathematical Sciences, Tezpur University, Assam, India, PIN-784028}
\email{pjm2099@gmail.com, msp25007@tezu.ac.in}

\author[M. P. Saikia]{Manjil P. Saikia}
	\address{Mathematical and Physical Sciences Division, School of Arts and Sciences, Ahmedabad University, Ahmedabad 380009, Gujarat, India}
	\email{manjil.saikia@ahduni.edu.in}

	\keywords{$t$-Core partition; Combinatorics; Young Diagram; Hook length; Hook length biases}
	
	\subjclass[2020]{05A17; 05A20; 11P81; 11P82}
	
\begin{document}
	\begin{abstract}
		Recently, the theory of hook length biases has emerged as a prominent research topic. Led by Ballantine, Burson, Craig, Folsom, and Wen [\textit{Res. Math. Sci.}, 2023], hook length biases are being explored for ordinary partitions, odd versus distinct partitions, self-conjugate versus distinct odd partitions. Lately, Singh and Barman [\textit{J. Number Theory}, 2024] opened the door to hook length biases in $\ell$-regular partitions. In this work, we extend the theory of hook length biases to $t$-core partitions. For example, let $a_{t,k}(n)$ denote the number of hooks of length $k$ in all $t$-core partitions of $n$, then we find that $a_{3,1}(n)\ge a_{3,2}(n) \ge a_{3,4}(n)$ and $a_{4,1}(n)\ge a_{4,3}(n)$ for all $n$. The methods employed in this work are mainly combinatorial.
	\end{abstract}
	\maketitle
	
	\section{Introduction}
A partition of a positive integer $n$ is a sequence of positive integers $\lambda=(\lambda_1, \lambda_2, \ldots, \lambda_r)$ such that $\lambda_1\geq \lambda_2\geq \cdots \geq \lambda_r$ and $\sum\limits_{i=1}^r\lambda_i = n$. The numbers $\lambda_1, \lambda_2, \ldots, \lambda_r$ are called the parts of the partition $\lambda$.

A \textit{Young diagram} of a partition $(\lambda_1, \lambda_2, \ldots, \lambda_r)$ is a left-justified array of boxes, where the $i$-th row from the top contains $\lambda_i$ boxes. For example, the Young diagram of the partition $(6,3,2,1)$ is shown in Figure \ref{fig:hook} (A). The \textit{hook length} of a box in a Young diagram is the sum of the number of boxes directly right to it, the number of boxes directly below it, and 1 (for the box itself). For example, see Figure \ref{fig:hook} (B) for the hook length of each box in the Young diagram of the partition $(6,3,2,1)$. A hook of length $k$ is also called a $k$-hook.
\begin{figure}[h]
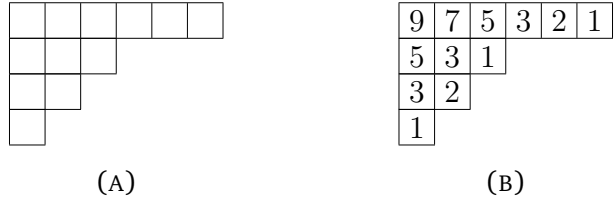

\centering
\begin{minipage}[b]{0.4\textwidth}
\[\young(~~~~~~,~~~,~~,~)\]
\subcaption{}
\end{minipage}
\begin{minipage}[b]{0.4\textwidth}
\[\young(975321,531,32,1)\]
\subcaption{}
\end{minipage}
\caption{The Young diagram of the partition $(6,3,2,1)$ and the hook length of each box}\label{fig:hook}
\end{figure}

For a positive integer $t\geq 2$, a $t$-core partition of $n$ is a partition of $n$ in which none of the hook lengths is divisible by $t$. For example, from Figure \ref{fig:hook} (B) we see that $(6,3,2,1)$ is a 4-core partition of 12.
    
Let $a_t(n)$ be the number of $t$-core partitions of $n$. Then it is known that \cite{garvan1990cranks}
    \begin{align*}
        \sum_{n=0}^\infty a_t(n)q^n&=\prod_{j=1}^\infty \dfrac{(1-q^{tj})^t}{1-q^j}.
    \end{align*}

One of the primary motivations for studying $t$-core partitions is their deep connection to the representation theory of symmetric groups. Specifically, $t$-core partitions determine the block structure of the representations of symmetric groups. For further aspects of $t$-core partitions in representation theory of symmetric groups, see \cite{GO96,JK81,Ols93}. Furthermore, $t$-core partitions also have a significant relation to the quadratic forms. This relationship is more fascinating when the above generating function is a weight-3/2 modular form. For example,  Ono and Sze \cite{OS97} proved that if $8n+5$ is square-free then $a_4(n)=1/2h(-32n-20)$, where $h(D)$ denotes the discriminant $D$ class number, viz. the order of the class group of the discriminant $D$ binary quadratic forms.

Now, let us recall another form of representation of a partition given by $$\lambda=(\lambda^{m_1}_1,\lambda^{m_2}_2,\ldots,\lambda^{m_r}_r),$$ where $m_i$ is the multiplicity of the part $\lambda_i$ and $\lambda_1>\lambda_2>\cdots>\lambda_r$.
When we consider multiplicity, the term `partition' refers exclusively to this representation.

Recently, inspired in part by hook-content formulas for specific restricted partitions in representation theory, Ballantine, Burson, Craig, Folsom, and Wen \cite{BBCFW23} investigated the total number of hooks of a given length in odd versus distinct partitions. Their work further motivated Singh and Barman \cite{SB24} to study hook length biases in ordinary and regular partitions. For integers $t \ge 2$ and $k \ge 1$, let $b_{t,k}(n)$ denote the number of hooks of length $k$ in all the $t$-regular partitions of $n$. Singh and Barman found that $b_{2,2}(n)\ge b_{2,1}(n)$ for all $n>4$, while $b_{2,2}(n)\ge b_{2,3}(n)$ for all $n\ge0$. They also conjectured that $b_{3,2}(n) \ge b_{3,1}(n)$ for all $n \ge 28$, which was first settled by He and Liu \cite{HL26}. He and Liu also proved that $b_{2,4}(n) \ge b_{2,3}(n)$ for all $n \ge 82$. Investigating another query by Singh and Barman, Qu and Zang \cite{QZ26} found that $b_{2,4}(n) \ge b_{2,5}(n)$ for all $n \ge 0$ and $n\neq 5$, and $b_{2,6}(n) \ge b_{2,7}(n)$ when $n\neq 7$.

In another work, Singh and Barman \cite{SB26} obtained $b_{t+1,1}(n) \ge b_{t,1}(n)$ for all $t\ge 2$ and $n \ge 0$. They also showed that $b_{3,2}(n) \ge b_{2,2}(n)$ for all $n \ge 3$, and $b_{3,3}(n) \ge b_{2,3}(n)$ for all $n \ge 0$. In the end, they conjectured that $b_{t+1,2}(n) \ge b_{t,2}(n)$ for all $t \ge 3$ and $n \ge 0$. Their conjecture, for the case $t=3$, was confirmed by Barman, Mahanta, and Singh \cite{BMS25}. Kim \cite{Kim25} partially confirmed the conjecture by proving that for each $\ell = 1, 2, 3$, there exist some
positive integers $N_t$ such that for all $n > N_t$, $b_{t+1,\ell}(n) \ge b_{t,\ell}(n)$. A complete proof was given by \cite{LZ25}.

Motivated by Ballantine, Burson, Craig, Folsom, and Wen \cite{BBCFW23}, Craig, Dawsey, and Han \cite{CDH26} compared the frequency with which partitions into odd parts and partitions into distinct parts have hook numbers equal to $h \ge 1$. They derived an asymptotic formula for the total number of hooks equal
to $h$ that appear among partitions into odd and distinct parts, respectively, to obtain their results. A similar work on self-conjugate partitions and partitions with distinct odd parts was done by Cossaboom \cite{Cos25}.

Continuing this line of research, we extend the theory of hook length biases to $t$-core partitions, noting that these partitions are themselves characterized by their hook lengths.

 Let $a_{t,k}(n)$ count the hooks with length $k$ in all the $t$-core partitions of $n$. We establish the following results for $a_{t,k}(n)$.
    
    \begin{proposition}\label{Thm 2 Core Formula}
For all $n\geq 0$, we have
\begin{align}
  \label{2coreformula}a_{2,2k+1}(n)=\begin{cases}
\ell-k, & \text{ if } n=\frac{\ell(\ell+1)}{2} \text{ for some non-negative}\\
& \text{ integer } \ell, \text{ where } 0\le k\le \ell-1,\\
 0, &    \text{ otherwise}.
\end{cases}  
\end{align}
\end{proposition}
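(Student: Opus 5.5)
The plan is to use the classical description of $2$-core partitions and then count hook lengths in a single, explicit Young diagram. First I will recall (or quickly re-derive) that the $2$-cores are exactly the staircase partitions $\delta_\ell=(\ell,\ell-1,\ldots,1)$, $\ell\ge 0$, of size $\ell(\ell+1)/2$. The quickest route is via hooks. In any partition containing two equal consecutive parts $\lambda_i=\lambda_{i+1}>0$, the box $(i,\lambda_i)$ has hook length $2$. Similarly, if $\lambda_i-\lambda_{i+1}\ge 2$ (with $\lambda_{r+1}=0$), the box $(i,\lambda_i-1)$ has hook length $2$. So a $2$-core must have all consecutive differences equal to $1$, including a last part equal to $1$; that is, it is a staircase.

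Conversely, I will check that every hook length of $\delta_\ell$ is odd. The box in row $i$, column $j$ (with $i+j\le \ell+1$) has arm $\ell+1-i-j$ and leg $\ell+1-i-j$, hence hook length $2(\ell+1-i-j)+1$. This confirms that $\delta_\ell$ is a $2$-core, so for each $n$ there is at most one $2$-core of $n$. When $n$ is not triangular there is none, which gives the second case of \eqref{2coreformula}.

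Next, for $n=\ell(\ell+1)/2$, I count the boxes of $\delta_\ell$ with hook length $2k+1$. By the formula above, these are the boxes with $i+j=\ell-k$ and $i,j\ge 1$. Such boxes lie on an antidiagonal, and they all fit inside the diagram since $i+j\le \ell+1$ holds automatically. The number of pairs $(i,j)$ with $i,j\ge1$ and $i+j=\ell-k$ is $\ell-k-1$. This does not match the claimed $\ell-k$, so I will recheck the hook computation carefully.

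In row $i$ the boxes to the right of $(i,j)$ number $\lambda_i-j=\ell-i+1-j$. Column $j$ has length $\ell-j+1$, so the boxes below number $\ell-j+1-i$. The hook length is therefore $2(\ell+1-i-j)+1$, and hook length $2k+1$ means $i+j=\ell+1-k$. The count is then $\ell-k$ pairs, valid for $0\le k\le \ell-1$. For larger $k$ the count is $0$, consistent with the stated range; my earlier slip was only an off-by-one. As a sanity check, $\delta_3$ has hooks $5,3,1;3,1;1$, giving three $1$'s ($\ell-0=3$), two $3$'s and one $5$.

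The only step needing real care is the characterization of $2$-cores. I will present it through the hook-length-$2$ boxes exhibited above, which avoids needing abacus machinery. Everything else is a direct count along antidiagonals.
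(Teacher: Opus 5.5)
Your proof is correct and follows the paper's route: you rule out repeated parts and gaps $\ge 2$ via explicit $2$-hooks to force the staircase $(\ell,\ldots,1)$, use arm $=$ leg to see that all hooks are odd, and count boxes of hook length $2k+1$ (your antidiagonal count $i+j=\ell+1-k$, giving $\ell-k$ boxes, is a cleaner version of the paper's count of ``pairs of last boxes'' of rows $i$ and $i+k$). One small fix is needed: the box $(i,\lambda_i)$ has hook length $2$ only when additionally $\lambda_{i+2}<\lambda_i$ (for example, in $(1,1,1)$ the box $(1,1)$ has hook length $3$), so you should take $i$ to be the last index of the run of equal parts.
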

    
    As an immediate consequence of the above proposition, we have the following bias among the hooks of odd lengths in 2-cores.
\begin{corollary}\label{Cor HLB 2 Core}
For all $\displaystyle{n=\dfrac{\ell(\ell+1)}{2}}$ with $\ell\ge 0$, we have
        \begin{align*}
            a_{2,2k+1}(n)-a_{2,2k+3}(n)=1,
        \end{align*}
        where $0\le k\le \ell-2$.
    \end{corollary}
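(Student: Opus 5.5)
The corollary follows directly from Proposition~\ref{Thm 2 Core Formula}, so the plan is simply to evaluate formula \eqref{2coreformula} at two consecutive odd hook lengths and check that both are in the nonzero range. Fix $\ell\ge 0$ and set $n=\ell(\ell+1)/2$. Since the triangular numbers are strictly increasing in $\ell$, this $\ell$ is uniquely determined by $n$, so the first case of \eqref{2coreformula} applies unambiguously. Note also that the range $0\le k\le \ell-2$ is empty when $\ell\le 1$, so the statement only has content for $\ell\ge 2$.

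Now take $k$ with $0\le k\le \ell-2$. Since $0\le k\le \ell-1$, the proposition gives $a_{2,2k+1}(n)=\ell-k$. Writing $2k+3=2(k+1)+1$, we have $0\le k+1\le \ell-1$ exactly because $k\le \ell-2$. So the proposition applies again and gives $a_{2,2k+3}(n)=\ell-(k+1)$. Subtracting yields $a_{2,2k+1}(n)-a_{2,2k+3}(n)=(\ell-k)-(\ell-k-1)=1$.

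There is no real obstacle. The only point requiring care is the index bookkeeping: the upper bound $k\le\ell-2$, rather than $k\le \ell-1$, is exactly what keeps $k+1$ inside the range where the proposition's formula is the nonzero value $\ell-(k+1)$.

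As an optional sanity check, this matches the known structure of $2$-cores. The $2$-core of size $\ell(\ell+1)/2$ is the staircase $(\ell,\ell-1,\dots,1)$. Its hook lengths on the $i$-th antidiagonal are all equal to $2i-1$, and that antidiagonal has $\ell-i+1$ boxes. Setting $i=k+1$, hook length $2k+1$ occurs $\ell-k$ times, which agrees with \eqref{2coreformula}.
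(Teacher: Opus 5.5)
Your proof is correct and matches the paper, which treats the corollary as an immediate consequence of Proposition~\ref{Thm 2 Core Formula}: evaluate \eqref{2coreformula} at $k$ and $k+1$, note that both lie in the nonzero range because $k\le \ell-2$, and subtract. Your antidiagonal sanity check is also right, provided the antidiagonals are counted from the rim inward. It gives a cleaner direct verification of the formula than the paper's row-by-row description.
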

    
    \begin{theorem}\label{Thm HLB 2-4}
        For all $n\ge 0$, we have
        \begin{align}
            a_{3,1}(n)\ge a_{3,2}(n) \ge a_{3,4}(n).
        \end{align}
    \end{theorem}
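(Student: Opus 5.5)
I plan to prove a stronger, pointwise statement: every $3$-core $\lambda$ has at least as many $1$-hooks as $2$-hooks, and at least as many $2$-hooks as $4$-hooks. Summing over all $3$-cores of $n$ then gives Theorem~\ref{Thm HLB 2-4}. Small examples support this. The $3$-core $(2)$ has hook counts $(1,1,0)$ for lengths $1,2,4$, and $(3,1)$, with hooks $4,2,1,1$, has counts $(2,1,1)$. So no bijection between different partitions should be needed.

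\textbf{Step 1: set up the abacus.} Fix a $\beta$-set $X$ of $\lambda$ large enough to be normalized. The hooks of length $k$ in $\lambda$ are in bijection with pairs $(x,x-k)$ such that $x\in X$, $x-k\notin X$ and $x-k\ge 0$. Place $X$ on a $3$-runner abacus, writing position $x=3i+r$ with $0\le r\le 2$. Since $\lambda$ is a $3$-core, each runner $r$ is filled exactly in rows $0\le i<c_r$. The vector $(c_0,c_1,c_2)$ satisfies one linear normalization, so it is essentially the Garvan--Kim--Stanton parametrization of $3$-cores by integer vectors with zero sum.

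\textbf{Step 2: closed forms for $N_k(\lambda)$, the number of $k$-hooks, for $k=1,2,4$.} Fix a bead $x=3i+r$.
\begin{itemize}
\item For $k=1$, position $x-1$ lies on runner $r-1$ in the same row, except when $r=0$, where it lies on runner $2$ one row up.
\item For $k=2$, position $x-2$ lies on runner $r-2\equiv r+1 \pmod 3$, one row up when $r\in\{0,1\}$.
\item For $k=4$, position $x-4$ lies on runner $r-1$ one row up, or two rows up when $r=0$.
\end{itemize}
In each case the number of beads on runner $r$ with an empty target is $\max(0,c_r-c_{s}-\delta)$, where $s$ is the target runner and $\delta\in\{0,1,2\}$ is the row shift. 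This gives
\[
N_k(\lambda)=\sum_{r=0}^{2}\max\bigl(0,\;c_r-c_{s_k(r)}-\delta_k(r)\bigr),
\]
with explicit $s_k(r)$ and $\delta_k(r)$. I will need to handle the boundary condition $x-k\ge0$ carefully; the normalization of $X$ should make it automatic.

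\textbf{Step 3: compare the piecewise-linear functions.} The runner pairs appearing in $N_1$ and $N_4$ are the same cyclic pairs $(r,r-1)$, but $N_4$ has larger row shifts. Since $\max(0,\cdot)$ is monotone, this should give $N_4\le N_1$ term by term.

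The inequality $N_2\le N_1$ pairs runners in the opposite cyclic direction, $(r,r+1)$. To prove it, I will use the zero-sum normalization. Write $d_r=c_r-c_{r-1}$, suitably shifted, so that $d_0+d_1+d_2$ is a fixed constant. Then $N_1$ is the sum of the positive parts of the $d_r$, and $N_2$ is the sum of the positive parts of the $-d_r$, up to shifts. Because a zero-sum-type vector has its positive parts and negative parts equal in total, $N_1$ and $N_2$ differ by an amount controlled by the shifts. The same kind of bookkeeping should give $N_2\ge N_4$.

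I expect the main obstacle to be exactly this bookkeeping of the shifts $\delta$ and of the orbit of $(c_0,c_1,c_2)$. There are finitely many sign patterns of the $d_r$, up to the cyclic symmetry and conjugation, which swaps runners. I will first try to dispatch each pattern by direct comparison, checking each pattern against small cores such as $(2)$, $(1,1)$ and $(3,1)$ to fix the constants.

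If the pointwise inequality fails for some cores, the fallback is to sum $N_k$ over the parametrization with $\sum c_r=n$-constraint, i.e. over the quadratic form $n=\tfrac32\sum c_r^2+\dots$. I would then compare the resulting counts via an explicit involution on the lattice vectors, such as the cyclic shift or the negation induced by conjugation.
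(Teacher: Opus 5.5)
Your proposal is correct and takes a genuinely different route from the paper. The paper also proves the pointwise statement, but it works inside Young diagrams. It extracts local conditions 3A--3D on the parts and multiplicities of a $3$-core, asserts that every $3$-core has one of seven staircase shapes A--G, and counts $1$-, $2$- and $4$-hooks shape by shape. Your abacus formulas replace that classification. The bookkeeping you anticipate closes in a few lines, so you need neither a sign-pattern case split over orbits nor the fallback. Write $x^+=\max(0,x)$, $[P]$ for the indicator of $P$, and $d_r=c_r-c_{r-1}$ with indices mod $3$; then $d_0+d_1+d_2=0$ for every $\beta$-set, with no normalization needed. Your Step 2 gives
\begin{align*}
N_1&=d_1^+ + d_2^+ + (d_0-1)^+,\\
N_2&=(-d_1-1)^+ + (-d_2-1)^+ + (-d_0)^+,\\
N_4&=(d_1-1)^+ + (d_2-1)^+ + (d_0-2)^+.
\end{align*}
The condition $x-k\ge 0$ is indeed automatic, since a counted empty target lies in a row $\ge c_s\ge 0$.

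The identity $x^+-(-x-1)^+=x+[x\le -1]$ gives $N_1-N_2=[d_1\le -1]+[d_2\le -1]-[d_0\ge 1]$. This is $\ge 0$ because $d_0\ge 1$ forces $d_1+d_2\le -1$. Termwise, $N_1-N_4=[d_1\ge 1]+[d_2\ge 1]+[d_0\ge 2]$. Your monotonicity remark $N_4\le N_1$ is not by itself one of the two required inequalities. You need this exact difference and then subtract it from $N_1-N_2$. After that, $N_2-N_4\ge 0$ follows from three cases according to which of $d_1,d_2$ are $\le -1$: if both are, then $d_0\ge 2$; if only one is, then $d_0$ or the other one is $\ge 1$.

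Your approach buys several things:
\begin{itemize}
\item exact formulas for each individual core;
\item the sharp ranges $N_1-N_2\in\{0,1\}$ and $N_2-N_4\in\{0,1,2\}$, consistent with the paper's counts for A--G;
\item an argument that does not rest on the exhaustiveness of a pictorial list of shapes;
\item a template that carries over directly to a $t$-runner abacus. This matters for the $5$-core conjecture, where the paper says enumerating forty base shapes is not viable.
\end{itemize}
The paper's approach, in exchange, needs no $\beta$-set or abacus machinery.
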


    \begin{theorem}\label{Thm HLB 4 Core 1-3}
        For all $n\ge 0$, we have
        \begin{align}
            \label{4 Core Bias 1}a_{4,1}(n)\ge a_{4,3}(n).
        \end{align}
    \end{theorem}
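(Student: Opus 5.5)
We plan to prove $a_{4,1}(n)\ge a_{4,3}(n)$ combinatorially, by building an injection from the set of pairs $(\lambda,b)$ with $\lambda$ a $4$-core of $n$ and $b$ a box of hook length $3$ in $\lambda$, into the set of pairs $(\lambda,c)$ with $c$ a box of hook length $1$, i.e.\ a removable corner. Hooks of length $1$ in $\lambda$ are in bijection with its corners. Equivalently, it suffices to show that in every $4$-core $\lambda$ the number of $3$-hooks is at most the number of corners. No global rearrangement across different partitions should then be needed.

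We will use the abacus / beta-set description. Let $X\subset\mathbb{Z}$ be the set of first-column hook lengths, shifted to a bi-infinite Maya diagram. A $k$-hook is a pair (bead at position $x$, gap at $x-k$). Corners are therefore the pairs (bead $x$, gap $x-1$), i.e.\ the maximal "bead runs" boundaries. The $4$-core condition says: whenever $x\in X$, also $x-4\in X$ (each runner of the $4$-abacus is flush). A $3$-hook is a bead at $x$ with a gap at $x-3$.

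The key step is to map each such $3$-hook $(x, x-3)$ to a corner. Look at positions $x-1$ and $x-2$. Since $x-3$ is a gap and $X$ is $4$-flush, $x+1$ is also a gap. If $x-1$ is a gap, send the $3$-hook to the corner $(x,x-1)$. Otherwise $x-1\in X$; if $x-2$ is a gap, send it to the corner $(x-1,x-2)$. Otherwise $x-2\in X$ and $x-3\notin X$, so send it to the corner $(x-2,x-3)$. In every case we have followed the bead run ending at $x$ downward to its bottom. So the map sends $(x,x-3)$ to the corner at the bottom of the maximal bead run containing $x$, within the window $\{x-2,x-1,x\}$.

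Injectivity is the step I expect to be the main obstacle. We must show that two distinct $3$-hooks $x<x'$ cannot land on the same run bottom. Beads in the same run satisfy $x'-x\le 2$, so $x'-3\in\{x-2,x-1\}$. By construction the run covers the window from its bottom up to $x'$, so $x'-3$ would then be a bead, contradicting that it is a gap. Hence distinct $3$-hooks give distinct corners. I will double-check this against the flushness condition and verify small cases, e.g.\ $\lambda=(3,1,1)$ and the example $(6,3,2,1)$ from Figure~\ref{fig:hook}. Summing over all $4$-cores of $n$ then gives \eqref{4 Core Bias 1}.

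If the local argument turns out to need flushness in an essential way, I would also check where the $4$-core hypothesis enters. It is needed in order to rule out long runs, and it also explains why the inequality can fail for general partitions.
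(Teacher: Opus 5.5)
Your setup is fine: beads at $\lambda_i-i$, $k$-hooks as bead/gap pairs at distance $k$, $4$-cores as $4$-flush bead sets, and corners as bottoms of finite bead runs. The map itself is also well defined. The injectivity argument, however, does not work. If $x<x'$ are both sent to the run bottom $b$, then by construction $b\ge x'-2$. So $x'-3$ lies strictly \emph{below} the run, not inside it. In fact $x'-3\in\{b-2,b-1\}$, and $b-1$ is a gap by the definition of $b$, so no contradiction arises. That paragraph also never uses the $4$-core hypothesis, and it cannot succeed without it. For $\lambda=(3,3)$ you get $X=\{2,1,-3,-4,\dots\}$. Both $3$-hooks $(2,-1)$ and $(1,-2)$ are sent to the only corner $(1,0)$, and indeed $(3,3)$ has two hooks of length $3$ but only one of length $1$.

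The missing step is already in your proposal. You noted that flushness forces $x+1\notin X$ whenever $(x,x-3)$ is a $3$-hook. So every $3$-hook bead is the \emph{top} of its maximal run. Since $x-3\notin X$, that run bottoms out within $\{x-2,x-1,x\}$, i.e.\ it ends in a corner. A run has only one top, so distinct $3$-hooks lie in distinct runs and go to distinct corners. Equivalently: if $x<x'$ shared a run, then $x+1\in X$, hence $x-3=(x+1)-4\in X$.

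With this repair your proof is correct and genuinely different from the paper's. The paper enumerates the sixteen possible ``base steps'' of a $4$-core's Young diagram and tracks step by step how the numbers of $1$- and $3$-hooks grow. Your injection is shorter and visibly works partition by partition. The same argument also shows that every $t$-core has at most as many $(t-1)$-hooks as corners, for every $t\ge 2$.
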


Numerical exploration in Mathematica suggests the following conjecture.

     \begin{conjecture}\label{Thm HLB 5 Core 1-3}
        For all $n\ge 0$, we have
        \begin{align}
             \label{5 Core Bias 1}a_{5,1}(n)\ge a_{5,3}(n)\ge a_{5,6}(n).
        \end{align}
    \end{conjecture}

    \begin{remark}
        It is worth noting that $a_{4,2}(n)$ does not always lie between $a_{4,1}(n)$ and $a_{4,3}(n)$ in \eqref{4 Core Bias 1}. For example, $a_{4,1}(4)=1<a_{4,2}(4)=2$, while $a_{4,2}(3)=2<a_{4,3}(3)=3$. 

        A similar phenomenon occurs for $a_{5,2}(n)$ and $a_{5,4}(n)$, 
        which do not necessarily lie between the pairs $a_{5,1}(n), a_{5,3}(n)$ and $a_{5,3}(n), a_{5,6}(n)$, respectively, in \eqref{5 Core Bias 1}.

        Therefore, it would be worthwhile to investigate the values of $n$ for which $a_{4,2}(n)$ does not lie between $a_{4,1}(n)$ and $a_{4,3}(n)$ in \eqref{4 Core Bias 1}. Similar investigations may also be performed for $a_{5,2}(n)$ and $a_{5,4}(n)$.
    \end{remark}

Let $a^{-C}_{t,k}(n)$ denote the total number of $k$-hooks in the $t$-core partitions of $n$ whose parts do not belong to the set $C$.

  \begin{theorem}\label{Thm HLB 4 Core no 1 and 2}
        For all $n\ge 0$, we have
        \begin{align}
            a^{-\{1,2\}}_{4,1}(n)= a^{-\{1,2\}}_{4,3}(n).
        \end{align}
    \end{theorem}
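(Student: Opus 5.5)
The plan is to pass to the boundary-word / $4$-abacus description of a $4$-core, where both kinds of hooks become pairs of positions, and then count both sides explicitly.

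\textbf{Boundary-word setup.} Encode a partition $\lambda$ by its boundary word $s=(s_i)_{i\in\mathbb{Z}}$, read from bottom-left to top-right. Each $s_i$ is $E$ (a horizontal step) or $N$ (a vertical step), with $s_i=N$ for $i\ll 0$ and $s_i=E$ for $i\gg 0$. The standard fact is that the boxes of hook length $k$ correspond bijectively to pairs $(i,i+k)$ with $s_i=E$ and $s_{i+k}=N$. Hence $a^{-\{1,2\}}_{4,1}(n)$ counts adjacent $EN$ pairs in the words of the relevant partitions, and $a^{-\{1,2\}}_{4,3}(n)$ counts pairs $(i,i+3)$ of type $(E,N)$. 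It therefore suffices to prove, for each fixed $4$-core $\lambda$ with no parts $1$ or $2$, that
\begin{equation*}
\#\{i: s_i=E,\ s_{i+1}=N\}=\#\{i: s_i=E,\ s_{i+3}=N\}.
\end{equation*}

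\textbf{Step 1: the $4$-core condition on runners.} Split the positions by residue modulo $4$ into four runners. The $4$-core condition says there is no pair $(i,i+4m)$ of type $(E,N)$. So on each runner $r\in\{0,1,2,3\}$ the word is all $N$ below some threshold and all $E$ from the threshold on. Call the threshold $c_r$, meaning $s_i=N$ for $i<c_r$ and $s_i=E$ for $i\ge c_r$, with $i\equiv r \pmod 4$. Thus $\lambda$ is determined by the four numbers $c_0,\dots,c_3$, with the usual normalization of the charge.

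\textbf{Step 2: count both sides in terms of the thresholds.} A pair at distance $1$ links runner $r$ to runner $r+1$, while a pair at distance $3$ links runner $r$ to runner $r+3\equiv r-1$. For two monotone runners, the number of $i\equiv r$ with $s_i=E$ and $s_{i+d}=N$ equals the number of $i$ in the interval $[c_r,\,c_{r+d}-d)$ in that residue class. This is $\max\bigl(0,\lceil (c_{r+d}-d-c_r)/4\rceil\bigr)$, up to a careful bookkeeping of the wrap-around from runner $3$ to runner $0$, which shifts indices by $4$. So both sides become explicit sums over the cyclically adjacent pairs of runners: the $1$-hooks use the forward differences and the $3$-hooks use the backward differences.

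\textbf{Step 3: use the condition on parts.} The hypothesis that there are no parts $1$ or $2$ means that after the last initial $N$ the word begins with at least three consecutive $E$'s. In threshold language, this pins down the relative order and spacing of the $c_r$ near the smallest threshold. I expect it to kill exactly the correction term that makes the forward and backward sums differ for general $4$-cores. For comparison, general $4$-cores only satisfy $a_{4,1}(n)\ge a_{4,3}(n)$, and the discrepancy comes from partitions ending in small parts.

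\textbf{Main obstacle.} The hardest step will be this case analysis: verifying that, for every admissible ordering of $(c_0,c_1,c_2,c_3)$ compatible with the ``no parts $1,2$'' condition, the two sums agree exactly. I would first arrange the thresholds in increasing order and write each count as a telescoping sum of gaps. I would then check that the boundary terms match precisely when the first run of $E$'s has length at least $3$. If the casework becomes unwieldy, the fallback is a direct bijection. Each $EN$ corner at $(i,i+1)$ would be matched with the $3$-hook obtained by stepping back to the nearest suitable $E$. The $4$-core monotonicity would guarantee injectivity, and the initial run of three $E$'s would guarantee that the map is defined at the leftmost corner.
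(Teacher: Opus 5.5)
Your Steps 1 and 2 are correct, but the proposal never actually proves the equality. Step 3 only says the hypothesis should ``kill the correction term.'' The verification itself is postponed as the ``main obstacle,'' with an unverified fallback bijection, and that postponed step is the whole content of the theorem. Moreover, the casework you anticipate over admissible orderings of $(c_0,\dots,c_3)$ never arises.

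The missing idea is that runner monotonicity plus the initial run $EEE$ determines the word almost completely. Let $p$ be the position of the first $E$, so $s_i=N$ for $i<p$ and $s_p=s_{p+1}=s_{p+2}=E$. Since positions $p-4,p-3,p-2$ carry $N$, the thresholds of the runners through $p,p+1,p+2$ are exactly $p,p+1,p+2$. So every later position on these three runners is $E$. Only the fourth runner is free, and since $s_{p-1}=N$ its threshold is $p+3+4\ell$ for some $\ell\ge 0$. Hence the word is $\cdots NN(EEEN)^{\ell}EE\cdots$, i.e.\ $\lambda=(3\ell,3\ell-3,\dots,6,3)$, which is empty for $\ell=0$. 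In particular the smallest part is exactly $3$, and such a $4$-core of $n$ exists only when $n=3\ell(\ell+1)/2$, in which case it is unique.

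With this structure the count is immediate. The $N$'s sit at $p+4k+3$ for $0\le k<\ell$. The adjacent $(E,N)$ pairs are $(p+4k+2,p+4k+3)$, and the distance-$3$ pairs of type $(E,N)$ are $(p+4k,p+4k+3)$. So both counts equal $\ell$, and matching these two pairs for each $k$ is the bijection you were looking for. Your Step 2 formula confirms this: only the runner through $p+2$ contributes to the $1$-hook count, and only the runner through $p$ contributes to the $3$-hook count, each giving $\ell$.

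Once completed this way, your argument is the paper's argument in boundary-word language. The paper observes that the base step must be form D (smallest part $3$ with multiplicity $1$). It then notes that every subsequent step has horizontal arm of length $3$ and height $1$, each adding exactly one $1$-hook and one $3$-hook.
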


     \begin{theorem}\label{Thm HLB 4 Core no 1}
        For all $n\ge 0$, we have
        \begin{align}
            a^{-\{1\}}_{4,1}(n)\ge a^{-\{1\}}_{4,3}(n).
        \end{align}
    \end{theorem}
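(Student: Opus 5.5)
We would prove the inequality by splitting the $4$-core partitions of $n$ with no part equal to $1$ according to whether $2$ is a part. If $2$ is not a part, these partitions are exactly the ones counted by $a^{-\{1,2\}}_{4,k}(n)$. For them Theorem~\ref{Thm HLB 4 Core no 1 and 2} already gives equality of the total numbers of $1$-hooks and $3$-hooks. It therefore suffices to show that, summed over the $4$-cores of $n$ whose smallest part is $2$, the number of $1$-hooks is at least the number of $3$-hooks.

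First we restrict the shape of such a partition. Suppose $2$ is the smallest part and occurs with multiplicity $m$. The cells in the first column of the last $m$ rows have hook lengths $3,4,\dots$ read upwards from the bottom; more precisely, the cell $(r-j,1)$ has hook length $j+2$. Hence $m\ge 3$ would produce a $4$-hook, so $m\in\{1,2\}$. In both cases the bottom rows contribute exactly one corner, at the end of the last row. They also contribute a controlled number of $3$-hooks: for $m=2$ the cell $(r-1,1)$ is a $3$-hook, and for $m=1$ the only candidate is the cell in row $r-1$, column $2$, which can have hook length $3$ only in special configurations.

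Next we remove the rows equal to $2$, obtaining a partition $\mu$ of $n-2m$ with all parts at least $3$. Removing these rows changes hook lengths only in columns $1$ and $2$, where each hook length drops by exactly $m$. We then write
\[
\#\{1\text{-hooks of }\lambda\}-\#\{3\text{-hooks of }\lambda\}
\]
as the corresponding difference for the part of $\lambda$ lying in columns $\ge 3$, plus an explicit correction coming from columns $1$ and $2$. The first-column hook lengths of $\lambda$ form a $\beta$-set that avoids multiples of $4$. The $4$-core condition then forces the hook lengths in columns $1,2$ to lie in a few residue patterns modulo $4$, and these patterns can be listed. Using this, we would show that the correction term is nonnegative. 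Failing that, we would pair each $\lambda$ whose correction is negative with another partition (for instance, its image under conjugation, restricted to shapes that still have no part $1$) whose correction is positive, giving an injection from excess $3$-hooks to excess corners.

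The main obstacle is the middle step: a $4$-core with no parts $1,2$ need not stay a $4$-core after parts $2$ are appended, so the correspondence with Theorem~\ref{Thm HLB 4 Core no 1 and 2} is not a bijection. We would handle this by working throughout with the $\beta$-set (abacus) description on $4$ runners. Appending rows $2$ or $2,2$ corresponds to inserting beads at small positions. The $4$-core condition says the beads on each runner are flush, and counting $1$-hooks and $3$-hooks becomes counting pairs of beads and gaps at distance $1$ or $3$ across runners. We expect the required inequality to reduce to a comparison of runner lengths. This is the step we anticipate needing the most case checking.
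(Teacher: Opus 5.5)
There is a genuine gap. The inequality for the $4$-cores with smallest part $2$ is only announced (``we would show'', ``failing that'', ``we expect''), never proved.

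The local part you do analyse is harmless. For $m\in\{1,2\}$, columns $1,2$ contain exactly one $1$-hook. They contain one $3$-hook if $m=2$ and none if $m=1$; your configuration $m=1$, $\lambda_{r-1}=3$ cannot occur, since then cell $(r-1,1)$ has hook length $4$. So the correction term is $0$ or $1$.

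The missing piece is the other summand. The hooks of $\lambda$ in columns $\ge 3$ are exactly the hooks of $\nu=(\lambda_1-2,\lambda_2-2,\ldots)$. This $\nu$ is again a $4$-core, but it may have parts $1$ and $2$. It is not your row-deleted $\mu$, and Theorem \ref{Thm HLB 4 Core no 1 and 2} says nothing about it, since that theorem is an identity between totals over all $4$-cores of $n$ with no parts $1,2$. To bound this term you need $\#\{1\text{-hooks}\}\ge\#\{3\text{-hooks}\}$ for an arbitrary single $4$-core. Once you have that, the theorem follows at once by summing over the $4$-cores of $n$ with no part $1$, and the splitting is unnecessary.

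That per-partition inequality is what the paper uses (see the Remark after the proof of Theorem \ref{Thm HLB 4 Core 1-3}). Its proof restricts the base-step list of Figure \ref{fig:base-steps-4-core} to forms C, D, H, I, J. It then notes that each further step adds either a $1$-hook alone or one $1$-hook and one $3$-hook.

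Your conjugation fallback cannot work. $\lambda$ and $\lambda'$ have the same multiset of hook lengths, hence the same value of $\#\{1\text{-hooks}\}-\#\{3\text{-hooks}\}$, so pairing with conjugates never offsets a deficit.

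Your abacus idea does close the gap, with no case checking, provided you apply it to all $4$-cores. Place beads at $\lambda_i-i$ for $i\ge 1$, so that a $k$-hook is a bead at $x$ together with a gap at $x-k$. For a $4$-core, runner $c\in\{0,1,2,3\}$ carries beads exactly at the positions $4j+c$ with $j<b_c$. Put $d_1=b_1-b_0$, $d_2=b_2-b_1$, $d_3=b_3-b_2$ and $d_0=b_0-b_3-1$, so that $d_0+d_1+d_2+d_3=-1$.

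Counting bead--gap pairs at distance $1$ and $3$, with $x^{+}=\max(x,0)$, gives $\#\{1\text{-hooks}\}=\sum_c d_c^{+}$ and $\#\{3\text{-hooks}\}=\sum_c(-d_c-1)^{+}$. Hence $\#\{1\text{-hooks}\}-\#\{3\text{-hooks}\}=-1+\#\{c: d_c<0\}$. This is $\ge 0$ because the $d_c$ sum to $-1$, so at least one is negative. That is the per-partition statement your argument lacks.
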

    
\begin{theorem}\label{Thm HLB 5 Core no 1 and 2}
        For all $n\ge 0$, we have
        \begin{align}
            a^{-\{1,2\}}_{5,1}(n)\le a^{-\{1,2\}}_{5,3}(n).
        \end{align}
    \end{theorem}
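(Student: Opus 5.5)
We plan to argue on the boundary sequence of the Young diagram and build an injection from $1$-hooks to $3$-hooks. Read the boundary of $\lambda$ from the bottom-left to the top-right as a two-sided infinite $0$/$1$ word $w=(w_i)_{i\in\mathbb Z}$: $w_i=1$ for an east step and $w_i=0$ for a north step. The word is all $0$'s far to the left and all $1$'s far to the right. Then $k$-hooks of $\lambda$ correspond bijectively to pairs $i<j$ with $j-i=k$, $w_i=1$ and $w_j=0$.

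Two hypotheses translate into the word. First, $\lambda$ is a $5$-core exactly when there is no pair $i<j$ with $5\mid (j-i)$, $w_i=1$, $w_j=0$; equivalently, on each residue class mod $5$ the word is a run of $0$'s followed by a run of $1$'s (the abacus picture). Second, since $\lambda$ has no parts equal to $1$ or $2$, the first east run of the word, which encodes the smallest part, has length at least $3$. So after the last initial $0$, at position $p$ say, we have $w_{p+1}=w_{p+2}=w_{p+3}=1$.

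A $1$-hook is an occurrence $w_i=1$, $w_{i+1}=0$ (a corner). To each corner we attach a $3$-hook as follows:
\begin{itemize}
\item If $w_{i-2}=1$, use the pair $(i-2,i+1)$.
\item Otherwise, if $w_{i+3}=0$, use the pair $(i,i+3)$.
\end{itemize}
Both candidates are genuine $3$-hooks, and the rule is injective across these two cases: from a pair $(a,a+3)$ we recover $i$ either as $a+2$ or as $a$, and in the first case $w_{a+2}w_{a+3}=10$, while in the second case $w_aw_{a+1}=10$. If both readings of a given pair were corners, that pair would need separate handling. A short case check on the local pattern $w_a\cdots w_{a+3}=1\,0\,1\,0$ settles this: there the corner at $a$ takes $(a-2,a+1)$ or $(a,a+3)$, and we will refine the rule so that it prefers whichever is unused.

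The remaining case is the main obstacle: $w_{i-2}=0$ and $w_{i+3}=1$, so the local pattern is $0\,?\,1\,0\,?\,1$ on positions $i-2,\dots,i+3$. Here we use the $5$-core condition. Since $w_{i+1}=0$, the core property forces $w_{i+1-5}=w_{i-4}=0$. Since $w_i=1$, it forces $w_{i+5}=1$; likewise $w_{i+3}=1$ forces $w_{i+8}=1$, and $w_{i-2}=0$ forces $w_{i-7}=0$.

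Using these constraints we try to locate a $3$-hook nearby. The first candidate is to examine $w_{i-1}$ and $w_{i+2}$:
\begin{itemize}
\item If $w_{i-1}=1$, the pair $(i-1,i+2)$ works when $w_{i+2}=0$.
\item If $w_{i+2}=0$ and $w_{i-1}=0$, we move to the next corner to the right and check which $3$-hooks it leaves unused.
\end{itemize}
The hypothesis ``no parts $1,2$'' is what rules out the bad configuration at the leftmost corner. There $w_{i-2}$ lies in the initial run (the case $i=p+1$ with a first east run of length $1$ or $2$), and this is exactly what excludes the small counterexamples among general $5$-cores, such as $(1)$. We would check that a bad corner can only occur immediately after another corner that was served by the second rule and has a spare $3$-hook.

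If the local injection becomes too tangled, the fallback is to use the abacus parametrisation of $5$-cores by vectors $(n_0,\dots,n_4)\in\mathbb Z^5$ with $\sum n_r=0$. In that parametrisation, the number of $1$-hooks is the number of $r$ with $n_r>n_{r-1}$ (with the appropriate shift at $r=0$). The number of $3$-hooks is the analogous count of runner pairs $(r,r+3)$ weighted by the positive part of the bead-count difference. The ``no parts $1,2$'' condition becomes a linear constraint on the smallest differences, and the inequality then reduces to an elementary comparison of these counts for integer vectors.
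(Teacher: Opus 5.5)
Your encoding is correct: $k$-hooks are pairs $(i,i+k)$ with $w_i=1$, $w_{i+k}=0$; $5$-cores are the words that read $0\cdots01\cdots1$ on each residue class mod $5$; and smallest part $\ge 3$ means $w_{p+1}=w_{p+2}=w_{p+3}=1$. However, the proof stops exactly where the work is. The case $w_{i-2}=0$, $w_{i+3}=1$ is left at ``we try'' and ``we would check'', so no injection is ever constructed.

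The proposed repair of collisions is also wrong as stated. A corner $a$ sent to $(a,a+3)$ by your second rule has $w_{a-2}=0$, so $(a-2,a+1)$ is not a $3$-hook and cannot be ``preferred''.

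The fallback miscounts as well. The number of corners between two adjacent runners is a positive part of a difference of levels, not an indicator. For example, the $5$-core $(23,19,15,11,7,3)$, which has all parts $\ge 3$, has six corners. So counting the $r$ with $n_r>n_{r-1}$ undercounts $1$-hooks.

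The missing idea is that the hypothesis acts on whole runners, not just on the leftmost corner. Because $w_{p+1}=w_{p+2}=w_{p+3}=1$ and each residue class is $0\cdots01\cdots1$, the classes of $p+1,p+2,p+3$ are all $1$ from those positions on. So any $z>p$ with $w_z=0$ lies in the class of $p+4$ or $p+5$. Then $z\ge p+4$, and $z-3\ge p+1$ lies in the class of $p+1$ or $p+2$, so $w_{z-3}=1$.

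Taking $z=i+1$ for a corner $i$ gives $w_{i-2}=1$ for every corner. Hence your first rule always applies and $i\mapsto(i-2,i+1)$ is injective. The second rule, the collisions and the ``main obstacle'' never arise.

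More precisely, the $3$-hooks are exactly the pairs $(z-3,z)$ with $z>p$ and $w_z=0$. So their number is the number of parts, while the number of $1$-hooks is the number of distinct parts. If $a,b\ge 0$ are the numbers of zeros after $p$ on the two free runners, these counts are $a+b$ and $\max(a,b)$.

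This proves the inequality for each partition individually. With this step added, your route becomes a shorter alternative to the paper's enumeration of the base steps $(7,3)$, $(8,4)$, $(6,3,3)$, $(7,3,3)$, $(6,6,3,3)$ and their extensions.
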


The following theorem arises as a natural extension, which presents biases among fixed hooks in different core partitions.

\begin{theorem}\label{Thm HLB 4-2 Core HL 1,3}
        For all $n\ge 0$ and $k\in\{1,3\}$, we have
        \begin{align}
            a_{2,k}(n)\le a_{4,k}(n).
        \end{align}
    \end{theorem}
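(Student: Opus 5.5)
Every $2$-core partition is a $4$-core: a hook length divisible by $4$ is in particular even. We first settle the case $a_{2,k}(n)=0$, where the inequality is trivial. By Proposition~\ref{Thm 2 Core Formula}, $a_{2,k}(n)$ is nonzero only when $n=\ell(\ell+1)/2$. In that case the unique $2$-core of $n$ is the staircase $\delta_\ell=(\ell,\ell-1,\ldots,1)$. It has $\ell$ hooks of length $1$ and $\ell-1$ hooks of length $3$, in agreement with \eqref{2coreformula} for $k=0,1$. Since $\delta_\ell$ is itself a $4$-core, it contributes the same amounts to $a_{4,1}(n)$ and $a_{4,3}(n)$. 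So it suffices to show that the $4$-cores of $n$ other than $\delta_\ell$ contribute at least one extra $1$-hook, and at least one extra $3$-hook when $\ell\ge 2$. Equivalently, for $\ell\ge 1$ we exhibit a single $4$-core $\mu\ne\delta_\ell$ of size $\ell(\ell+1)/2$ with at least one corner and at least one hook of length $3$.

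The $1$-hook claim is immediate for $\ell\ge 2$ (every nonempty partition has a corner) once some other $4$-core of $n$ exists. For $\ell=1$, where $n=1$ and only $(1)$ exists, we instead have $a_{2,1}(1)=a_{4,1}(1)=1$, so equality holds there. Likewise $a_{2,3}(n)=0$ when $\ell\le1$.

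To build the extra $4$-core, I would use the standard abacus or $4$-quotient description: a $4$-core corresponds to a vector $(c_0,\ldots,c_3)\in\mathbb Z^4$ with $\sum c_i=0$ and size $2\sum c_i^2+\sum i c_i$. This is the Garvan--Kim--Stanton parametrization. The staircase $\delta_\ell$ corresponds to a specific vector; in $2$-core terms, the $4$-runners pair up into two $2$-runners. The concrete choice is to perturb that vector by a small move that preserves the size. The cleaner alternative is a direct construction. For instance, the conjugate $\delta_\ell'=\delta_\ell$ gives nothing new, so instead I would search for an explicit family of $4$-cores of triangular size that differ from staircases:
\begin{itemize}
\item try $\delta_{\ell-2}$ augmented by suitable rows or columns;
\item or use the identity that the number of $4$-cores of $n$ equals the number of representations related to $8n+5$ as a sum of three squares.
\end{itemize}
The counting route is the more robust one. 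From the generating function, $a_4(n)\ge 2$ whenever $n=\ell(\ell+1)/2$ with $\ell\ge 2$. This can be shown via the known fact that $a_4(n)\ge1$ for all $n$, together with the classical formula relating $a_4(n)$ to representations of $8n+5$ by $x^2+2y^2+2z^2$ (or Ono--Sze). Alternatively, check small cases directly, and for general $\ell$ produce another core by a bijective move on the abacus.

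The remaining step is to make sure the extra $4$-core has a $3$-hook. A nonempty partition lacks a $3$-hook only if it is one of very few shapes: any partition of size at least $3$ has a hook of length $3$ unless it is $(1,1)$, $(2)$, $(2,2)$, or similar small shapes. This follows because removing rim hooks shows every partition of size $\ge3$ has a removable $3$-rim-hook, except those that are $3$-cores, and small $3$-cores can be listed. So I must pick $\mu$ to not be a $3$-core, or check directly. The main obstacle is producing, uniformly in $\ell\ge2$, a second $4$-core of size $\ell(\ell+1)/2$ that is not a $3$-core. I would first attempt an explicit abacus modification of $\delta_\ell$'s $4$-abacus, moving one bead up on one runner and one down on another so that the size change $2\sum c_i^2+\sum ic_i$ is zero. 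If that fails in general, I would fall back on the $8n+5$ representation count to guarantee $a_4(n)\ge 3$, which leaves at least one core other than $\delta_\ell$ that is not a small $3$-core.
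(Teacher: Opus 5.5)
Your first paragraph already proves the theorem, and nothing after it is needed. Every $2$-core is a $4$-core, so the $2$-cores of $n$ form a subset of the $4$-cores of $n$. Each partition contributes a nonnegative number of $k$-hooks, so $a_{2,k}(n)\le a_{4,k}(n)$ for every $k$, not only $k\in\{1,3\}$. Concretely, for non-triangular $n$ the left side is $0$. For $n=\ell(\ell+1)/2$, the unique $2$-core $\delta_\ell$ alone accounts for $a_{2,k}(n)$, and it is also counted in $a_{4,k}(n)$.

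Your next sentence (``So it suffices to show that the $4$-cores of $n$ other than $\delta_\ell$ contribute at least one extra \ldots'') quietly replaces the statement by the strict inequality, which is not claimed. Your own $\ell=1$ case, where you accept equality, shows this. The paper takes the same detour. It asks for $a_{4,3}(n)\ge\ell$, gets $\ell-1$ from the staircase, and then tries to show $a_4(n)\ge 2$ at triangular $n$. It does this via $\sum_n a_4(n)q^n=\sum_{m,r,s\ge0}q^{m(m+1)/2+r(r+1)+s(s+1)}$ and Dickson's theorem on $x^2+2y^2+2z^2$. So up to the point where the theorem is actually proved, your route and the paper's coincide.

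If you do want the strict version, the rest of your proposal does not deliver it.

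\emph{No second $4$-core is produced.} Your one concrete plan fails. By your own size formula, a single transfer $c\mapsto c+e_i-e_j$ changes the size by $4(c_i-c_j+1)+(i-j)$. This is never $0$, because $0<|i-j|\le 3$.

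\emph{Your $3$-hook lemma is false as stated.} $(2,2)$ has a $3$-hook at its box $(1,1)$. Partitions with no $3$-hook are exactly the $3$-cores, an infinite family (e.g.\ $(4,2)$, $(2,2,1,1)$), so they cannot be ``listed''. What is true, and what you need, is that a $4$-core without a $3$-hook is a $(3,4)$-core. There are only five: $\emptyset,(1),(2),(1,1),(3,1,1)$, and none has triangular size $\ge 3$.

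\emph{Your fallback $a_4(n)\ge 3$ is false.} We have $a_4(10)=2$: the only triples are $(m,r,s)=(4,0,0)$ and $(3,1,1)$, and the two $4$-cores are $(4,3,2,1)$ and $(5,2,1,1,1)$.

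\emph{Mere representability by $x^2+2y^2+2z^2$ cannot give a second core.} The identity $(2h+1)^2+4=(2h+1)^2+2\cdot 1^2+2\cdot 1^2$ is always an all-odd representation. It corresponds to $(m,r,s)=(h,0,0)$, the term already counted by $\sum_m q^{m(m+1)/2}$. The same caveat applies to the Dickson step in the paper, which only shows that some all-odd representation exists.

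A strict inequality would need a genuine lower bound on the number of representations, for instance via the Ono--Sze class number formula. None of this is required for the inequality as stated.
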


    In the following sections, we prove our results.

    \section{Proof of Proposition \ref{Thm 2 Core Formula}}\label{sec: 2-core}
    It is well known that
        \begin{align}
            \label{Gen 2 Core}\sum_{n=0}^\infty a_2(n)q^n&=\prod_{j=1}^\infty \dfrac{(1-q^{2j})^2}{1-q^j}=\sum_{\ell=0}^\infty q^{\ell(\ell+1)/2}.
        \end{align}
        Therefore, $a_2(n)=0$ if $n\neq\ell(\ell+1)/2$ for any $\ell\geq 0$.

Combinatorially, it can be justified as follows. In the Young diagram of a partition, a 2-hook may arise in two different forms, as shown in Figure \ref{fig:types-2-hooks}.
\begin{figure}[h]
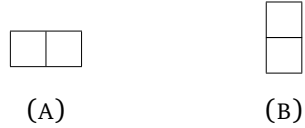

	\centering
	\begin{minipage}[b]{0.2\textwidth}
		\[\young(~~)\]
		\subcaption{}
	\end{minipage} \hspace{4mm}
	\begin{minipage}[b]{0.2\textwidth}
		\[\young(~,~)\]
		\subcaption{}
	\end{minipage}
	\caption{Types of 2-hooks}\label{fig:types-2-hooks}
\end{figure}

Therefore, in a 2-core partition, the multiplicity of any part cannot be greater than 1, and the difference between any two consecutive parts also cannot be greater than 1. Consequently, a partition containing no 2-hooks may take the form $(\ell, \ell-1,\ldots,2,1)$, for any $\ell\geq0$. Hence, $a_2(n)=0$ if $n\neq\ell(\ell+1)/2$ for any $\ell\geq 0$.

Again, when $\displaystyle{n=\dfrac{\ell(\ell+1)}{2}}$ for some $\ell\geq 0$, then the representation $(\ell, \ell-1,\ldots,2,1)$ of $n$ is unique. The nature of the staircase of the Young diagram of the partition $(\ell,\ell-1,\ldots,2,1)$ implies that the number of boxes directly right to any box is equal to the number of boxes directly below it. Consequently, the hook length of every box must be odd. That is, the only $2$-core partition of $\displaystyle{n=\dfrac{\ell(\ell+1)}{2}}$ is $(\ell,\ell-1,\ldots,2,1)$.

Note that different proofs of the above well known fact can be found in \cite{DS22,Rob00}.

Since the parts are consecutive, only the last box of each row in the Young diagram gives a hook of length 1. That gives $a_{2,1}(n)= \ell$ and proves the formula for $k=0$. Again, due to the consecutive parts of the unique 2-core partition, a hook of length 3 is located only between the last boxes of two consecutive rows in the Young diagram. There are $\ell-1$ such pairs of the last boxes. Therefore, $a_{2,3}(n)= \ell-1$, the formula for $k=1$. In the same way, a hook of length 5 occurs only between the last boxes of three consecutive alternate rows. In the Young diagram, there are $\ell-2$ pairs of such last boxes. Consequently, $a_{2,2k+1}(n)= \ell-k$, when $k=2$ is evident. This process can be repeated for the number of hooks for higher lengths to prove the other cases of the formula. The hook of the highest length $2\ell-1$ occurs between the boxes in the first and the last row in the Young diagram. So clearly, $a_{2,2\ell-1}(n)=a_{2,2(\ell-1)+1}(n)= 1=\ell-(\ell-1)$.
        
        Thus, the above arguments collectively prove \eqref{2coreformula}.
\qed

    \section{Combinatorial Proof of Theorem \ref{Thm HLB 2-4}}\label{sec: 3-core}
In the Young diagram of a partition, a 3-hook may arise in four different ways, as shown in Figure \ref{fig:types-3-hooks}.

\begin{figure}[h]
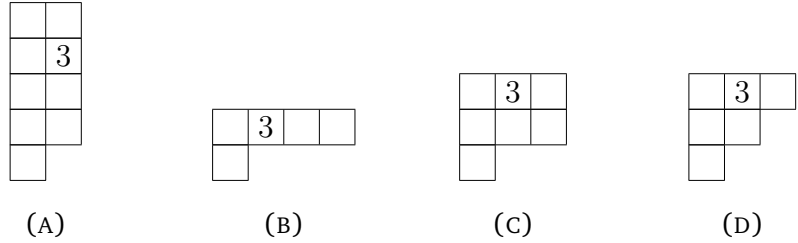

	\centering
	\begin{minipage}[b]{0.2\textwidth}
		\[\young(~~,~3,~~,~~,~)\]
		\subcaption{}
	\end{minipage} \hspace{4mm}
	\begin{minipage}[b]{0.2\textwidth}
		\[\young(~3~~,~)\]
		\subcaption{}
	\end{minipage}\hspace{4mm}
	\begin{minipage}[b]{0.2\textwidth}
		\[\young(~3~,~~~,~)\]
		\subcaption{}
	\end{minipage}\hspace{4mm}
	\begin{minipage}[b]{0.2\textwidth}
		\[\young(~3~,~~,~)\]
		\subcaption{}
	\end{minipage}
	\caption{Types of 3-hooks}\label{fig:types-3-hooks}
\end{figure}
Consequently, a $3$-core partition must satisfy the following conditions:
\begin{itemize}
    \item[3A:] $m_i\le 2$ for all $1\le i\le r$ (as implied by Type A 3-hook).
    \item[3B:] $\lambda_i-\lambda_{i+1}\le 2$ for all $1\le i\le r-1$, and and $\lambda_r\le 2$ (as implied by Type B 3-hook). 
\end{itemize}
Moreover, Type C and Type D 3-hooks, combined with Conditions 3A and 3B, imply that a $3$-core partition must satisfy the following conditions:
\begin{itemize}
    \item[3C:] If $m_i=2$, then $\lambda_i-\lambda_{i+1}=1$ for all $1\le i\le r-1$.
    \item[3D:] If $\lambda_i-\lambda_{i+1}=1 $, then $m_{i+1}= 2$ for all $1\le i\le r-1$.
\end{itemize}

Now, we eliminate the partitions that satisfy these conditions. Note, for example, that in the remaining partitions no 6-hooks occur. Three instances of 6-hooks are given in Figure \ref{fig:example-6-hooks}. The partitions containing these 6-hooks will be eliminated during the elimination of those with 3-hooks.

\begin{figure}[h]
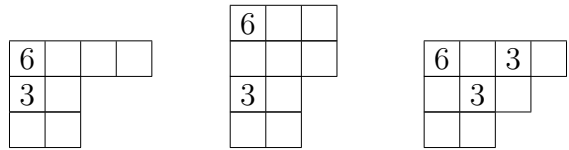

	\centering
	\begin{minipage}[b]{0.2\textwidth}
		\[\young(6~~~,3~,~~)\]
	\end{minipage}\hspace{0.5mm}
    \begin{minipage}[b]{0.2\textwidth}
		\[\young(6~~,~~~,3~,~~)\]
	\end{minipage}
    \hspace{0.5mm}
    \begin{minipage}[b]{0.2\textwidth}
		\[\young(6~3~,~3~,~~)\]
	\end{minipage}
	\caption{Three examples of 6-hooks}\label{fig:example-6-hooks}
\end{figure}

The Young diagram of any partition in the new set must take one of the forms shown in Figure \ref{fig:types-partitions}. Here, $\young(\vdots\vdots,\vdots\vdots)$ represents a tower of the form shown in Figure \ref{fig:tower}, of any height. Now, we will show that every partition of these forms is a 3-core partition.

\begin{figure}[h]
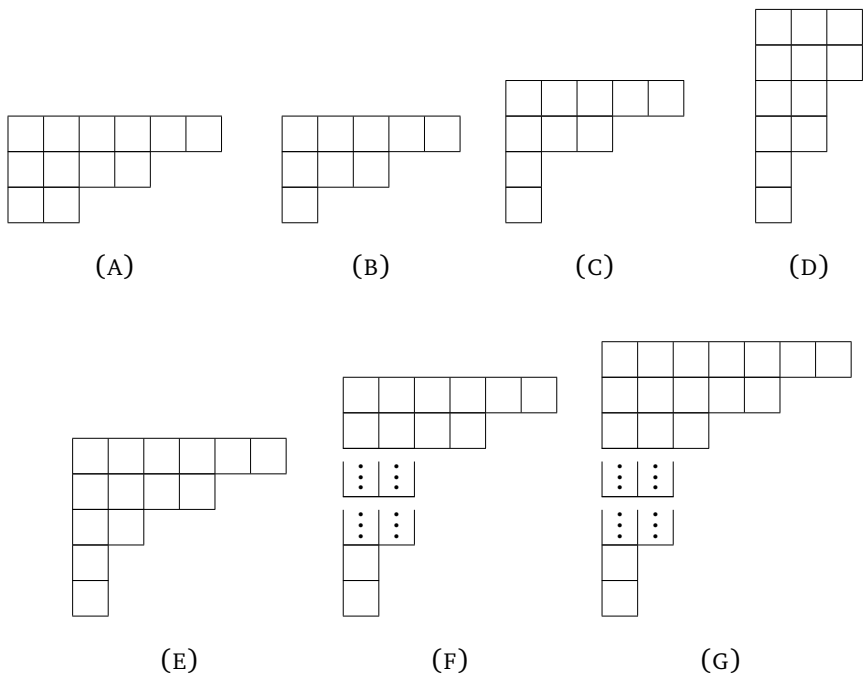

	\centering
	\begin{minipage}[b]{0.25\textwidth}
		\[\young(~~~~~~,~~~~,~~)\]
		\subcaption{}
	\end{minipage} \hspace{2mm}
	\begin{minipage}[b]{0.22\textwidth}
		\[\young(~~~~~,~~~,~)\]
		\subcaption{}
	\end{minipage}\hspace{2mm}
   	\begin{minipage}[b]{0.2\textwidth}
		\[\young(~~~~~,~~~,~,~)\]
		\subcaption{}
	\end{minipage}\hspace{2mm}
    \begin{minipage}[b]{0.2\textwidth}
		\[\young(~~~,~~~,~~,~~,~,~)\]
		\subcaption{}
	\end{minipage}\\
    
    \vspace{5mm}
    
	\begin{minipage}[b]{0.25\textwidth}
		\[\young(~~~~~~,~~~~,~~,~,~)\]
		\subcaption{}
	\end{minipage}  \hspace{2mm}
	\begin{minipage}[b]{0.25\textwidth}
		\[\young(~~~~~~,~~~~,\vdots\vdots,\vdots\vdots,~,~)\]
		\subcaption{}
	\end{minipage} \hspace{2mm}
	\begin{minipage}[b]{0.25\textwidth}
		\[\young(~~~~~~~,~~~~~,~~~,\vdots\vdots,\vdots\vdots,~,~)\]
		\subcaption{}
	\end{minipage}
	\caption{Types of partitions in the new set}\label{fig:types-partitions}
\end{figure}

\begin{figure}[h]
	\centering
		\[\young(~~~~,~~~~,~~~,~~~,~~,~~)\]
	\caption{A tower}\label{fig:tower}
\end{figure}

In the partitions of the form A in Figure \ref{fig:types-partitions}, we have $m_i=1$ for all $1\leq i\leq r$, and $\lambda_i-\lambda_{i+1}=2$ for all $1\leq i\leq r-1$, and $\lambda_r=2$. Therefore, if there are $\ell$ boxes directly below to a box, then there must be either $2\ell$ or $2\ell+1$ boxes directly right to it. Accordingly, every box has a hook length of either $3\ell+1$ or $3\ell+2$ for some $\ell \ge 0$. Hence, the partitions of the form A are 3-core partitions. A similar argument shows that the partitions of the forms B, C, D, E, F, and G are also 3-core partitions. 

Now, in Figure \ref{fig:types-partitions}, we observe that in the partitions of the form A, to each 1-hook there corresponds a 2-hook. Moreover, for $1\leq i\leq r-1$ to each 2-hook in $\lambda_i$ there corresponds a 4-hook. Therefore, here we have
    $$\#\text{ of 1-hooks } =\#\text{ of 2-hooks } =\#\text{ of 4-hooks }+1 .$$
Similarly, for the forms B, C, D, E, F, and G, we have
\begin{itemize}
    \item[]
    $\#\text{ of 1-hooks } =\#\text{ of 2-hooks } +1=\#\text{ of 4-hooks } +1,$
\item[]
    $\#\text{ of 1-hooks } =\#\text{ of 2-hooks } =\#\text{ of 4-hooks}+2,$
    \item[]
    $\#\text{ of 1-hooks }=\#\text{ of 2-hooks } =\#\text{ of 4-hooks}+1,$
    \item[]
    $\#\text{ of 1-hooks } =\#\text{ of 2-hooks }+1 =\#\text{ of 4-hooks}+1,$
    \item[]
    $\#\text{ of 1-hooks } =\#\text{ of 2-hooks } =\#\text{ of 4-hooks} +1,$
    \item[]
    $\#\text{ of 1-hooks } =\#\text{ of 2-hooks } +1 =\#\text{ of 4-hooks}+ 1,$
\end{itemize}
respectively.
Hence,
$$a_{3,1}(n)\ge a_{3,2}(n)\ge a_{3,4}(n),$$
for all $n\geq 0$. \qed

\section{Combinatorial proof of Theorem \ref{Thm HLB 4 Core 1-3} and Other Results}\label{sec: 4-core}

\begin{definition}
	The region of a hook in the Young diagram of a partition is defined as the shape formed by all boxes below the horizontal arm (or to the right of the vertical leg) of the hook, together with the boxes in the hook itself.
\end{definition}

\begin{figure}[h]
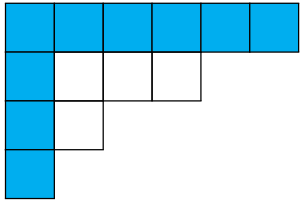
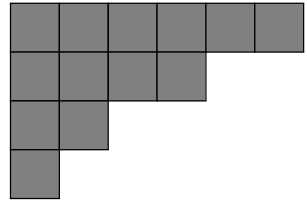

	\centering
	\begin{minipage}[b]{0.45\textwidth}
\begin{ytableau}
	*(cyan)~ & *(cyan)~ & *(cyan)~& *(cyan)~ & *(cyan)~ & *(cyan)~ \\
	*(cyan)~ & ~ & ~ & ~ \\
	 *(cyan)~ & ~\\
	 	 *(cyan)~
\end{ytableau}
	\subcaption{A 9-hook}
\end{minipage} \hspace{4mm}
\begin{minipage}[b]{0.45\textwidth}
\begin{ytableau}
	*(gray)~ & *(gray)~ & *(gray)~& *(gray)~ & *(gray)~ & *(gray)~ \\
	*(gray)~ & *(gray)~ & *(gray)~ & *(gray)~ \\
	*(gray)~ & *(gray)~\\
	*(gray)~
\end{ytableau}
		\subcaption{The region of the hook}
\end{minipage}
\caption{A hook and its region}\label{fig:region}
\end{figure}

\begin{theorem}\label{thm: region of hook}
The region of a $kt$-hook contains a $t$-hook, for all integers $t\geq 1$ and $k\geq 2$.
\end{theorem}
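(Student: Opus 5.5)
The plan is to reduce the statement to a property of the boundary path of a single Young diagram, and then use an elementary pigeonhole-type argument along that path.

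First, let the $kt$-hook have corner box $(i,j)$, and let $R$ be its region, i.e.\ the set of boxes $(a,b)$ of the diagram with $a\ge i$ and $b\ge j$. I will check that $R$ is itself (a translate of) a Young diagram $\mu$, whose first row is the arm of the hook together with $(i,j)$ and whose first column is the leg. The key observation is that for every box of $R$, its arm and leg in $\lambda$ lie entirely inside $R$. Hence the hook length of that box computed in $\mu$ equals its hook length in $\lambda$. So it suffices to show that a Young diagram $\mu$ whose $(1,1)$-hook has length $kt$ contains a box of hook length $t$. In $\mu$ we have $\mu_1+\mu_1'-1=kt$.

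Next, encode $\mu$ by its boundary path from the bottom-left corner to the top-right corner. This is a sequence $s_0,s_1,\dots,s_{kt}$ of $\mu_1$ east steps and $\mu'_1$ north steps, $kt+1$ steps in all. Because the first column and first row are nonempty, $s_0=E$ and $s_{kt}=N$. I will use the standard bijection: boxes of $\mu$ correspond to pairs $p<q$ with $s_p=E$ and $s_q=N$. The box is the one in the column of the east step $p$ and the row of the north step $q$, and its hook length is $q-p$. Verifying this correspondence, that is, that the arm and leg counts add up to $q-p-1$, is the only step needing a little care, and I expect it to be the main (though routine) obstacle; I would confirm it on the example of Figure~\ref{fig:region}.

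Finally, suppose for contradiction that no box of $\mu$ has hook length $t$. Then whenever $s_p=E$ and $p+t\le kt$, we must have $s_{p+t}=E$, since otherwise $(p,p+t)$ gives a $t$-hook. Starting from $s_0=E$, induction gives $s_t=s_{2t}=\cdots=s_{kt}=E$, contradicting $s_{kt}=N$. Hence $\mu$, and therefore the region of the $kt$-hook, contains a $t$-hook. The argument in fact works for every $k\ge1$, so the hypothesis $k\ge 2$ is only needed to make the $t$-hook found differ from the original hook.
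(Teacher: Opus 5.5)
Your proof is correct, and it takes a genuinely different route from the paper's.

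\textbf{The paper's route.} The paper works directly with the row lengths $|u_1|,|u_2|,\dots$ of the region. For $k=2$ it writes the $2t$-hook with $t+m$ boxes in the first row and $t-m$ rows below. It then argues greedily that avoiding a $t$-hook forces $|u_2|\ge m+2$, then $|u_3|\ge m+3$, and so on, until the last row of the region is long enough to contain a $t$-hook. The case $m\le 0$ is handled by the symmetric argument on the leg. The case $k\ge 3$ is only illustrated by a $21$-hook and declared similar.

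\textbf{Your route.} You reduce to a straight shape $\mu$, which is legitimate since arms and legs of boxes of the region stay inside the region. You then pass to its boundary sequence, where hook lengths become distances $q-p$ between an $E$ step and a later $N$ step. For that correspondence you should give the general count rather than an example check, but it is one line: the steps strictly between $p$ and $q$ are exactly the $E$ steps of the columns of the arm and the $N$ steps of the rows of the leg. After that, all the case analysis collapses to following the progression $0,t,2t,\dots,kt$. This is essentially the abacus argument that a hook length divisible by $t$ forces a hook of length $t$, localized to the region. You could even skip the passage to $\mu$. Run the same progression in the boundary sequence of $\lambda$ itself, starting from the $E$ step of column $j$. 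It produces a $t$-hook whose column is at least $j$ and whose row is at least $i$, hence inside the region.

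\textbf{Comparison.} Your approach buys uniformity and rigor. It treats every $k\ge 1$ at once and needs no split into $m\ge 1$ and $m\le 0$. It also pinpoints the $t$-hook: it sits at the first index $rt$ with $s_{rt}=E$ and $s_{(r+1)t}=N$. The paper's approach has the advantage of staying in the same hands-on Young-diagram language as its base-step analyses for $4$- and $5$-cores. However, its inductive step (``proceeding in this way'') and the case $k\ge 3$ are sketched rather than proved, so your version is the more complete proof.
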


\begin{proof}
First, we prove that the region of a $2t$-hook contains a $t$-hook, for all integers $t\geq 1$.

Let us consider an arbitrary $2t$-hook, as shown in Figure \ref{fig:2t-hook}, where $1 \leq m \leq t$. Here, $\begin{ytableau}\cdots\end{ytableau}$ (resp., $\begin{ytableau}\vdots\end{ytableau}$) represents horizontal (resp., vertical) sequence of boxes whose total number is greater than or equal to 0. We denote the rows of the hook as $u_1, u_2, \ldots, u_{t-m+1}$, respectively, and the number of boxes in row $u_i$ within the region of the hook as $|u_i|$.

		\begin{figure}[!h]
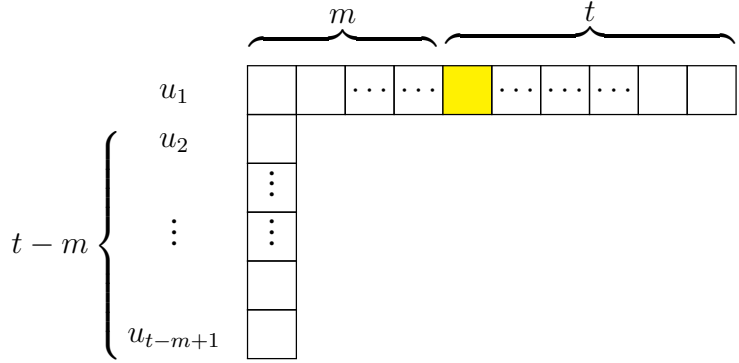

			\vspace{3em}
			\centering
			\begin{tabular}{r@{}l}
				\raisebox{-4.4em}{$t-m\left\{\vphantom{\begin{array}{c}~\\[4.5em] ~
					\end{array}}\right.$} &
				\hspace{2mm} 
				\begin{ytableau}
\none[u_1] & \none & ~    & ~ & \cdots & \cdots  & *(yellow) ~ & \cdots & \cdots &\cdots  & ~ & ~ \\
					\none[u_2] & \none & ~  \\
					\none & \none & \vdots   \\
					\none[\vdots] & \none & \vdots   \\
				\none & \none &	~   \\
				 \none[u_{t-m+1}] & \none &	~     
				\end{ytableau}\\[-11.3em]
  &\hspace{3.9em}$\overbrace{\hspace{5.9em}}^{\displaystyle m}$\\ [-2.1em]
	&\hspace{10.1em}$\overbrace{\hspace{9.1em}}^{\displaystyle t}$
			\end{tabular}\\ [9.2em]
			\caption{A $2t$-hook}\label{fig:2t-hook}
		\end{figure}
	
Now we try to construct a region of the hook so that no $t$-hook occurs. Therefore, $|u_2|$ is possibly $m+1$, as otherwise the yellow box in $u_1$ would have hook length $t$.

If $|u_2| = m+1$, then $u_2, u_3, \ldots, u_{t-m+1}$ form a $t$-hook. Consequently, $|u_2|$ is possibly $m+2$.

If $|u_2| = m+2$, then the rightmost $t-1$ boxes of $u_1$ and the rightmost box of $u_2$ form a $t$-hook. Consequently, $|u_3|$ is possibly $m+2$. But then, $u_3, \ldots, u_{t-m+1}$ form a $t$-hook. Therefore, $|u_3|$ is possibly $m+3$.

If $|u_3| = m+3$, then $u_2\geq m+3$. In this case, the rightmost $t-2$ boxes of $u_1$, one box of $u_2$, and the rightmost box of $u_3$ form a $t$-hook. Consequently, $|u_4|$ is possibly $m+3$.

Proceeding in this way, we see that $|u_{t-m+1}|$ is possibly $m+(t-m)=t$. Then, a $t$-hook must occur in $u_{t-m+1}$. That is, it is impossible to construct a region without the occurrence of a $t$-hook.

When $m\leq0$, we can apply the same process to the leg of the hook.

This completes the theorem for $k=2$.

For $k \geq 3$, the occurrence of a $t$-hook is even more clearly guaranteed. For example, in Figure \ref{fig:21-hook} a 21-hook is shown. To avoid the 7-hook in $u_1$, we see that $|u_2|$ must be at least 11. Similarly, if  $|u_2|=11$, to avoid the 7-hook in $u_2$, we see that $|u_3|$ is possibly 5. The subsequent arguments follow in a similar way to the case $k=2$.

	\begin{figure}[!h]
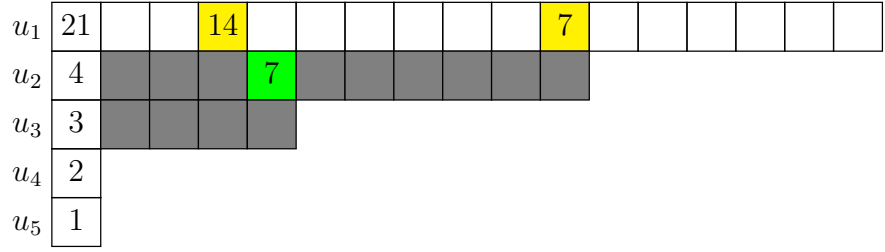

		\centering
		\begin{tabular}{r@{}l}
			\begin{ytableau}
				\none[u_1] & 21  & ~ & ~ & *(yellow) 14 & ~ & ~ & ~ & ~  & ~ & ~ & *(yellow) 7 & ~ & ~ & ~  & ~ & ~ & ~\\
				\none[u_2] & 4  & *(gray) ~ & *(gray) ~ & *(gray) ~ & *(green) 7 & *(gray) ~ & *(gray) ~ & *(gray) ~  & *(gray) ~ & *(gray) ~ & *(gray) ~ \\
				\none[u_3] &  3 & *(gray) ~  & *(gray) ~ & *(gray) ~ & *(gray) ~\\
				\none[u_4] & 2   \\
				\none[u_5] & 1     
			\end{ytableau}
		\end{tabular}
		\caption{A 21-hook}\label{fig:21-hook}
	\end{figure}
\end{proof}

Theorem \ref{thm: region of hook} implies that removing partitions of $n$ with a $t$-hook in their Young diagram leaves exactly the set of $t$-core partitions of $n$. As a result, applying Theorem \ref{thm: region of hook} simplifies the proof of Theorem \ref{Thm HLB 2-4}. Leaving this to the reader as an exercise, we now prove Theorem \ref{Thm HLB 4 Core 1-3} by applying Theorem \ref{thm: region of hook}.

\begin{proof}[Proof of Theorem \ref{Thm HLB 4 Core 1-3}]
In the Young diagram of a partition, the possible forms of 4-hooks are shown in Figure \ref{fig:types-4-hooks}.

\begin{figure}[h]
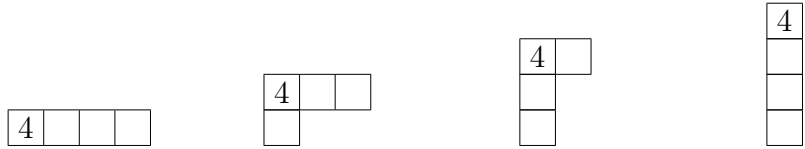

	\centering
	\begin{minipage}[b]{0.2\textwidth}
		\[\young(4~~~)\]
	\end{minipage} \hspace{4mm}
	\begin{minipage}[b]{0.2\textwidth}
		\[\young(4~~,~)\]
	\end{minipage} \hspace{4mm}
	\begin{minipage}[b]{0.2\textwidth}
		\[\young(4~,~,~)\]
	\end{minipage}\hspace{4mm}
	\begin{minipage}[b]{0.2\textwidth}
		\[\young(4,~,~,~)\]
	\end{minipage}
	\caption{Types of 4-hooks}\label{fig:types-4-hooks}
\end{figure}

Therefore, in a 4-core partition, we have
\begin{itemize}
    \item $m_i\leq 3$ and $\lambda_i-\lambda_{i+1}\leq 3$ for all $1\leq i\leq r$ and we take $\lambda_{r+1}=0$.
    \item If $\lambda_i-\lambda_{i+1}= 3$, then $m_i=1$ and $m_{i+1}=1, 2, \text{ or } 3$.
    \item If $\lambda_i-\lambda_{i+1}= 2$, then
    \begin{itemize}
        \item when $m_i=1$, then $m_{i+1}=2, \text{ or } 3$,
        \item when $m_i=2$, then $m_{i+1}=2, \text{ or } 3$,
        \item $m_i\neq 3$.
    \end{itemize}
    \item If $\lambda_i-\lambda_{i+1}= 1$, then
    \begin{itemize}
        \item when $m_i=1$, then $m_{i+1}=1, \text{ or } 3$,
        \item when $m_i=2$, then $m_{i+1}=3$,
        \item $m_i=3=m_{i+1}$.
    \end{itemize}
\end{itemize}

The base step of the staircase of the Young diagram of a 4-core partition takes one of the forms shown in Figure \ref{fig:base-steps-4-core}. In the forms D, I, J, M, O, and P in Figure \ref{fig:base-steps-4-core}, the number of 1-hooks is equal to the number of 3-hooks. In the remaining forms, the number of 1-hooks is greater than the number of 3-hooks.

\begin{figure}[h]
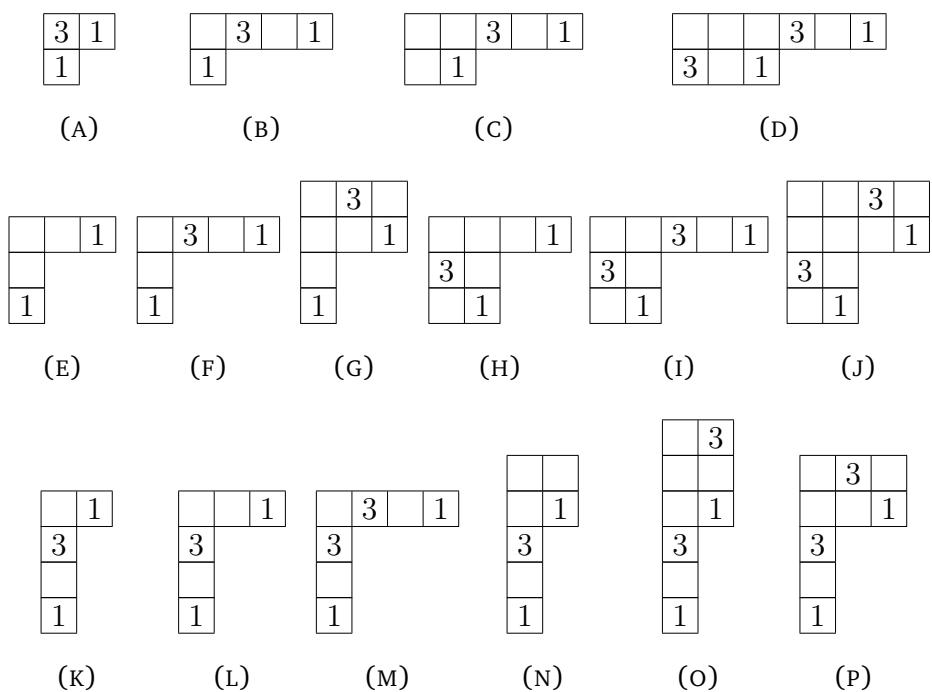

	\centering
\begin{minipage}[b]{0.15\textwidth}
		\[\young(31,1)\]
        \subcaption{}
	\end{minipage} \hspace{1mm}
	\begin{minipage}[b]{0.18\textwidth}
		\[\young(~3~1,1)\]
        \subcaption{}
	\end{minipage}\hspace{2.5mm}
    \begin{minipage}[b]{0.25\textwidth}
		\[\young(~~3~1,~1)\]
        \subcaption{}
	\end{minipage}\hspace{2mm}
\begin{minipage}[b]{0.3\textwidth}
		\[\young(~~~3~1,3~1)\]
        \subcaption{}
	\end{minipage}\\
    
    \vspace{2mm}
    
	\begin{minipage}[b]{0.13\textwidth}
		\[\young(~~1,~,1)\]
        \subcaption{}
	\end{minipage}\hspace{0.5mm}
    \begin{minipage}[b]{0.15\textwidth}
		\[\young(~3~1,~,1)\]
        \subcaption{}
	\end{minipage}\hspace{0.5mm}
    \begin{minipage}[b]{0.13\textwidth}
		\[\young(~3~,~~1,~,1)\]
        \subcaption{}
	\end{minipage}\hspace{0.5mm}
    \begin{minipage}[b]{0.15\textwidth}
		\[\young(~~~1,3~,~1)\]
        \subcaption{}
	\end{minipage}\hspace{0.5mm}
    \begin{minipage}[b]{0.2\textwidth}
		\[\young(~~3~1,3~,~1)\]
        \subcaption{}
	\end{minipage}\hspace{0.5mm}
    \begin{minipage}[b]{0.15\textwidth}
		\[\young(~~3~,~~~1,3~,~1)\]
        \subcaption{}
	\end{minipage}\\
    
    \vspace{2mm}

    \begin{minipage}[b]{0.15\textwidth}
		\[\young(~1,3,~,1)\]
        \subcaption{}
	\end{minipage}\hspace{0.5mm}
    \begin{minipage}[b]{0.15\textwidth}
		\[\young(~~1,3,~,1)\]
        \subcaption{}
	\end{minipage}\hspace{0.5mm}
    \begin{minipage}[b]{0.15\textwidth}
		\[\young(~3~1,3,~,1)\]
        \subcaption{}
	\end{minipage}\hspace{0.5mm}
    \begin{minipage}[b]{0.15\textwidth}
		\[\young(~~,~1,3,~,1)\]
        \subcaption{}
	\end{minipage}\hspace{0.5mm}
    \begin{minipage}[b]{0.15\textwidth}
		\[\young(~3,~~,~1,3,~,1)\]
        \subcaption{}
	\end{minipage}\hspace{0.5mm}
    \begin{minipage}[b]{0.15\textwidth}
		\[\young(~3~,~~1,3,~,1)\]
        \subcaption{}
	\end{minipage}
	\caption{Forms of the base step in the Young diagram of a 4-core partition}\label{fig:base-steps-4-core}
\end{figure}

Now we consider the following five cases.

\noindent \textbf{Case I: The horizontal arm of the base step is $\young(~)$. (The forms are A and K.)} In this case, the horizontal arm in the next step must be either $\young(~)$ or $\young(~~~)$. In both cases, the numbers of 1-hooks and 3-hooks each increase by 1.

\noindent \textbf{Case II: The horizontal arm of the base step is $\young(~~)$ and $\young(~~~)$. (The forms are B, C, D, E, F, H, I, L, and K.)} In this case, the horizontal arm in the next step must be $\young(~~~)$. Then, the numbers of 1-hooks and 3-hooks each increase by 1. Therefore, up to this step, the total number of 1-hooks equals the total number of 3-hooks for forms D, I, and M, whereas for the remaining forms, the total number of 1-hooks exceeds the total number of 3-hooks.

\noindent \textbf{Case III: The horizontal arm of the base step is $\young(~,~)$. (The form is N.)} Then, the horizontal arm in the next step must be $\young(~~)$, $\young(~~~)$, or $\young(~~,~~)$. Here, in the first case, the number of 1-hooks increases by 1 while the number of 3-hooks remains unchanged. In the latter two cases, the number of 1-hooks and 3-hooks each increase by 1.

\noindent \textbf{Case IV: The horizontal arm of the base step is $\young(~,~,~)$. (The form is O.)} In this case, the horizontal arm in the next step must be $\young(~)$, $\young(~~)$, $\young(~,~)$, $\young(~~~)$, $\young(~,~,~)$, or $\young(~~,~~)$. In the first three cases, the number of 1-hooks increases by 1 while the number of 3-hooks remains unchanged. In the latter three cases, the number of 1-hooks and 3-hooks each increase by 1.

\noindent \textbf{Case V: The horizontal arm of the base step is $\young(~~,~~)$. (The forms are G, J, and P.)} Here, the horizontal arm in the next step must be $\young(~~)$, $\young(~~~)$, or $\young(~~,~~)$. When the arm is $\young(~~)$, then in each form the number of 1-hooks increases by 1 while the number of 3-hooks remains unchanged. Again, When the arm is $\young(~~~)$ or $\young(~~,~~)$, then in each form, the numbers of 1-hooks and 3-hooks both increase by 1. 

Thus, by counting the 1-hooks and 3-hooks step by step in the Young diagram of a 4-core partition, we find that the total number of 1-hooks is always greater than or equal to the total number of 3-hooks.

This completes the proof.
\end{proof}

\begin{remark}
    The statement of Theorem \ref{Thm HLB 4 Core 1-3} concerns the total number of 1-hooks and 3-hooks in all 4-core partitions of $n$. But, the proof demonstrates that for each 4-core partition of $n$, the total number of 1-hooks is greater than or equal to the total number of 3-hooks.
\end{remark}

There are forty possible forms of the base step of the staircase of the Young diagram of a 5-core partition. Listing them as in the proof of Theorem \ref{Thm HLB 4 Core 1-3} is not a viable method to prove Conjecture \ref{Thm HLB 5 Core 1-3}. However, Theorem \ref{Thm HLB 5 Core no 1 and 2} can be proved using an analogous approach. We now proceed to the proofs of Theorem \ref{Thm HLB 4 Core no 1 and 2}, Theorem \ref{Thm HLB 4 Core no 1}, and Theorem \ref{Thm HLB 5 Core no 1 and 2}.

\begin{proof}[Proof of Theorem \ref{Thm HLB 4 Core no 1 and 2}]
The form of the base step in the Young diagram of a 4-core partition when the smallest part is not 1 and 2 is D as shown in Figure \ref{fig:base-steps-4-core}. Here, the number of 1-hooks is equal to the number of 3-hooks. In this form, the horizontal arm in the next step must be $\young(~~~)$. Then, the numbers of 1-hooks and 3-hooks each increase by 1. This completes the proof.
\end{proof}

\begin{proof}[Proof of Theorem \ref{Thm HLB 4 Core no 1}]
When the smallest part is not 1, the forms of the base step in the Young diagram of a 4-core partition must take one of the forms C, D, H, I, or J as shown in Figure \ref{fig:base-steps-4-core}. For the form J, the horizontal arm in the next step must be $\young(~~)$, $\young(~~~)$, or $\young(~~,~~)$. Moreover, for the remaining forms, it must be $\young(~~~)$. The remainder of the argument is left to the reader.
\end{proof}

\begin{proof}[Proof of Theorem \ref{Thm HLB 5 Core no 1 and 2}]
    The forms of the base step in the Young diagram of a 5-core partition when the smallest part is not 1 and 2 are shown in Figure \ref{fig:base-steps-5-core-smallest-part-3}. In each form, the number of 1-hook is less than or equal to the number of 3-hooks.

\begin{figure}[h]
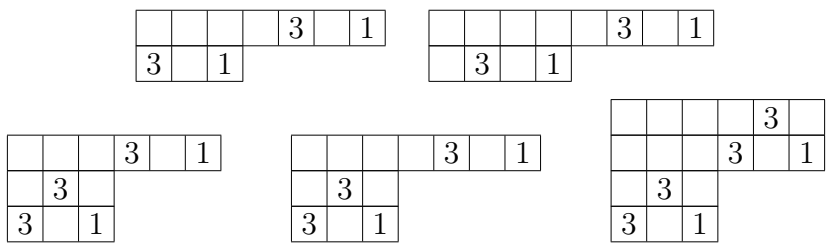

	\centering
\begin{minipage}[b]{0.3\textwidth}
		\[\young(~~~~3~1,3~1)\]
	\end{minipage} \hspace{1mm}
	\begin{minipage}[b]{0.3\textwidth}
		\[\young(~~~~~3~1,~3~1)\]
	\end{minipage}\\
    \begin{minipage}[b]{0.3\textwidth}
		\[\young(~~~3~1,~3~,3~1)\]
	\end{minipage}\hspace{1mm}
\begin{minipage}[b]{0.3\textwidth}
		\[\young(~~~~3~1,~3~,3~1)\]
	\end{minipage}\hspace{1mm}    
	\begin{minipage}[b]{0.3\textwidth}
		\[\young(~~~~3~,~~~3~1,~3~,3~1)\]
	\end{minipage}
	\caption{Forms of the base step in the Young diagram of a 5-core partition when the smallest part is not 1 and 2}\label{fig:base-steps-5-core-smallest-part-3}
\end{figure}

For the first four forms, the horizontal arm in the next step must be $\young(~~~~)$. Then, the numbers of 1-hooks and 3-hooks each increase by 1.

Moreover, for the last form, the horizontal arm in the next step must be $\young(~~~)$, $\young(~~~~)$, or $\young(~~~,~~~)$. In the first two cases, the numbers of 1-hooks and 3-hooks each increase by 1, and in the last case, the number of 3-hooks exceeds the number of 1-hooks.

This completes the proof.
\end{proof}

\begin{remark}
    Figure \ref{fig:base-steps-4-core} or the list of forty possible forms of the base step of the staircase of the Young diagram of a 5-core partition can provide more information. For example,

$$ a^{-\{1,2\}}_{4,1}(n)= a^{-\{1,2\}}_{4,3}(n)=
\begin{cases}
    \ell, &\text{ if } n=\frac{3\ell(\ell+1)}{2} \text{ for some positive integer } \ell,\\
    0, & \text{ otherwise}.
\end{cases}$$
\end{remark}

\section{Proof of Theorem \ref{Thm HLB 4-2 Core HL 1,3}}
From Proposition \ref{2coreformula}, we know that $a_{2,1}(n)=\ell$, $a_{2,3}(n)=\ell -1$ for $n=\ell(\ell+1)/2$. To prove Theorem \ref{Thm HLB 4-2 Core HL 1,3}, it therefore suffices to show that $a_{4,3}(n)\ge\ell$ for $n=\ell(\ell+1)/2$. The form A in Figure \ref{fig:base-steps-4-core} in Section \ref{sec: 4-core} implies $a_{4,3}(n)\ge\ell-1$. Consequently, the task reduces to proving the existence of at least two 4-core partitions of $n$, whenever $n=\ell(\ell+1)/2$. The following alternative, yet elementary, approach settles the problem. Note that 
 \begin{align*}
     \sum_{n=0}^\infty a_4(n)q^n&=\prod_{j=1}^\infty \dfrac{(1-q^{4j})^4}{1-q^j}=\prod_{j=1}^\infty\dfrac{(1-q^{2j})^2}{1-q^j}\cdot \dfrac{(1-q^{4j})^2}{1-q^{2j}}\cdot \dfrac{(1-q^{4j})^2}{1-q^{2j}}.
 \end{align*}
 Due to the second equality in \eqref{Gen 2 Core}, we have
 \begin{align*}
     \sum_{n=0}^\infty a_4(n)q^n&=\sum_{m,r,s=0}^\infty q^{m(m+1)/2+r(r+1)+s(s+1)}\\
     &=\sum_{m=0}^\infty q^{m(m+1)/2}+\sum_{\substack{m,r,s=0\\\textup{$r, s\neq 0$ simultaneously}}}^\infty q^{m(m+1)/2+r(r+1)+s(s+1)}.
 \end{align*}
 In the above identity, we see exponents of the form $m(m+1)/2$ in the first summation. This means that the first summation contributes one 4-core partition for every triangular number. Therefore, the objective reduces to confirming that for every $h\ge 2$, there are integers $m,r,s$ such that
 \begin{align*}
     \dfrac{h(h+1)}{2}&=\dfrac{m(m+1)}{2}+r(r+1)+s(s+1),
 \end{align*}equivalently,
 \begin{align*}
     (2h+1)^2+4&=(2m+1)^2+2(2r+1)^2+2(2s+1)^2.
 \end{align*}

Dickson \cite[Theorem VI]{Dic27} proved that the quadratic form $x^2+2y^2+2z^2$ represents all positive integers not of the form $4^k(8n+7)$.

Now, for any $h\ge 2$, we have
\[
    (2h+1)^2 + 4\equiv 5 \pmod 8,
    \]
which readily implies that $(2h+1)^2+4$ is not of the form $4^k(8n + 7)$, as $4^k(8n+7)\equiv 0 \textup{~or~} 4 \pmod8.$ Therefore, by Dickson's result \cite[Theorem VI]{Dic27}, there exist integers $x, y, z$ such that $(2h+1)^2 + 4 = x^2 + 2y^2 + 2z^2$. It remains to show that $x, y$, and $z$ are odd. From the above arguments, we have
\begin{equation}\label{eq1}
        x^2 + 2y^2 + 2z^2 \equiv 5 \pmod 8.
\end{equation}
Since $2y^2 + 2z^2$ is even, $x^2$ must be odd, which implies that $x$ is odd. Thus, equation \eqref{eq1} reduces to
\[
    y^2 + z^2 \equiv 2 \pmod 4.
\]
Now it is routine that a case by case analysis of the parities of $y$ and $z$ implies that the only way the above holds is when $y$ and $z$ are both odd. Thus, we have proved that for every $h\ge 2$ there exist integers $m, r, s$ such that
\[
(2h+1)^2+4 = (2m+1)^2+2(2r+1)^2+2(2s+1)^2.
\]
This completes the proof of the theorem. \qed

	\section*{Statements and Declarations}

	\textbf{Data Availability} This manuscript does not contain any associated data.\\
	
	\textbf{Competing Interests} The authors declare that they have no competing interests.\\
	
	\textbf{Funding Information}
    The third author was partially supported by an institutional fellowship for doctoral research from Tezpur University, Assam, India.


\end{document}